\newcommand{\ignore}[1]{}
\newcommand{\sH}{{\mathcal{H}}}
\newtheorem{thm}{Theorem}[section]
\newtheorem{prop}[thm]{Proposition}
\theoremstyle{definition}
\theoremstyle{remark}
\author{Ji\v{r}\'{\i} Lebl}
\thanks{The first author was in part supported by NSF grant DMS 0900885.}
\address{Department of Mathematics, Oklahoma State University,
Stillwater, OK 74078, USA}
\email{lebl@math.okstate.edu}
\date{October 8, 2013}
\title{Addendum to Uniqueness of certain polynomials constant on a line}
\begin{document}

\begin{abstract}
The computer calculations in \cite{LL} to classify sharp polynomials
with nonegative coefficients constant on the line $x+y=1$
have been extended to degrees 19 and 21.
In degree 19 a surprisingly large number of 13 sharp polynomials was found,
while in degree 21 only the group invariant polynomial exists.
\end{abstract}

\maketitle


\section{Introduction} \label{section:intro}

Let $\sH(2,d)$ be the space of polynomials $p(x,y)$ of two variables with
nonnegative coefficients such that $p(x,y)=1$ whenever $x+y=1$.
It is known~\cite{DKR} that the degree $d$ satisfies $d \leq 2N-3$, where $N$
is the number of nonzero coefficients of $p$.  Furthermore,
for each odd degree $d$, there exists a group invariant
polynomial with precisely $d=2N-3$.  We call polynomials satisfying equality
the \emph{sharp polynomials}.  See \cites{DKR,LL} for more information,
background, and motivation.

In \cite{LL} using computer code, we have have classified
the
sharp polynomials in $\sH(2,d)$ up to degree 17.
Due to the increase in the speed of computers over the last several years and
improvements to the computer code to be described below, it was
possible to run the computer code for degrees 19 and 21.  The
computation with all the improvements takes approximately 5 days 
for degree 19 on a
relatively recent 4-core CPU .  For degree 21, the computation
took over 8 months.

The computer has found 13 sharp polynomials polynomials in degree 19 up to
symmetry of the variables, several of which were unexpected.
Two of the polynomials are symmetric.  The number of
sharp polynomials in degree 19
is stunning as there have been only 16 sharp polynomials in all odd
degrees up to degree 17.

In degree 21, the computer code found that up to swapping of variables,
there are no sharp polynomials
besides the group invariant one, which 
was previously known.

We have therefore computed one new term in the sequence
A143106 on OEIS~\cite{OEIS:uniq}, of degrees where the group invariant
polynomial is the unique one up to swapping of variables.  The sequence is
now known to be:
\begin{equation}
1, 3, 5, 9, 17, 21 .
\end{equation}
We have also computed two new terms for the sequence A143107 on
OEIS~\cite{OEIS}.  That is, a sequence whose $N$th term is the number of sharp
polynomials of degree $2N-3$, or in other words, the number of polynomials
in $\sH(2,2N-3)$ with $N$ nonzero coefficients.  In this sequence symmetry
is not taken into account and therefore there are 24 polynomials altogether
for degree 19 and 2 polynomials for degree 21.  The sequence is now known to be:
\begin{equation}
0, 1, 1, 2, 4, 2, 4, 8, 4, 2, 24, 2 .
\end{equation}

\section{The polynomials}

Let us start with degree 21.  In this degree only the group invariant
polynomial exists.  That is, up to swapping of variables the only sharp
degree 21 polynomial is the group invariant one.
\begin{multline}
x^{21}
+ 21\,x^{19}\,y^1
+ 189\,x^{17}\,y^2
+ 952\,x^{15}\,y^3
+ 2940\,x^{13}\,y^4
+ 5733\,x^{11}\,y^5
+ 7007\,x^9\,y^6
+\\
5148\,x^7\,y^7
+ 2079\,x^5\,y^8
+ 385\,x^3\,y^9
+ 21\,x^1\,y^{10}
+ y^{21}
\end{multline}

In degree 19, the situation is dramatically different.  Let us list the
polynomials we found.  
First we have the group invariant polynomial.
\begin{multline}
{x}^{19}+19\,y\,{x}^{17}+152\,{y}^{2}\,{x}^{15}+665\,{y}^{3}\,{x}^{13}+1729\,{y}^{4}\,{x}^{11}+2717\,{y}^{5}\,{x}^{9}+2508\,{y}^{6}\,{x}^{7}+\\
1254\,{y}^{7}\,{x}^{5}+285\,{y}^{8}\,{x}^{3}+19\,{y}^{9}\,x+{y}^{19}
\end{multline}
Then we have several polynomials with integer coefficients.
\begin{multline}
{x}^{19}+19\,y\,{x}^{17}+152\,{y}^{2}\,{x}^{15}+665\,{y}^{3}\,{x}^{13}+1729\,{y}^{4}\,{x}^{11}+2090\,{y}^{5}\,{x}^{9}+627\,{y}^{9}\,{x}^{5}+\\
627\,{y}^{5}\,{x}^{5}+285\,{y}^{8}\,{x}^{3}+19\,{y}^{9}\,x+{y}^{19}
\end{multline}
\begin{multline}
{x}^{19}+285\,{y}^{3}\,{x}^{13}+1425\,{y}^{4}\,{x}^{11}+19\,{y}^{9}\,{x}^{9}+2679\,{y}^{5}\,{x}^{9}+19\,y\,{x}^{9}+2508\,{y}^{6}\,{x}^{7}+\\
1254\,{y}^{7}\,{x}^{5}+285\,{y}^{8}\,{x}^{3}+19\,{y}^{9}\,x+{y}^{19}
\end{multline}
\begin{multline}
{x}^{19}+285\,{y}^{3}\,{x}^{13}+1425\,{y}^{4}\,{x}^{11}+19\,{y}^{9}\,{x}^{9}+2052\,{y}^{5}\,{x}^{9}+19\,y\,{x}^{9}+627\,{y}^{9}\,{x}^{5}+\\
627\,{y}^{5}\,{x}^{5}+285\,{y}^{8}\,{x}^{3}+19\,{y}^{9}\,x+{y}^{19}
\end{multline}
Next, three polynomials with rational coefficients with denominator 25.
\begin{multline}
{x}^{19}+19\,y\,{x}^{17}+152\,{y}^{2}\,{x}^{15}+\frac{15371\,{y}^{3}\,{x}^{13}}{25}+\frac{6137\,{y}^{4}\,{x}^{11}}{5}+\frac{4807\,{y}^{5}\,{x}^{9}}{5}+\\
\frac{1254\,{y}^{13}\,{x}^{3}}{25}+\frac{4617\,{y}^{8}\,{x}^{3}}{25}+\frac{1254\,{y}^{3}\,{x}^{3}}{25}+19\,{y}^{9}\,x+{y}^{19}
\end{multline}
\begin{multline}
{x}^{19}+\frac{5871\,{y}^{3}\,{x}^{13}}{25}+\frac{4617\,{y}^{4}\,{x}^{11}}{5}+19\,{y}^{9}\,{x}^{9}+\frac{4617\,{y}^{5}\,{x}^{9}}{5}+19\,y\,{x}^{9}+\\
\frac{1254\,{y}^{13}\,{x}^{3}}{25}+\frac{4617\,{y}^{8}\,{x}^{3}}{25}+\frac{1254\,{y}^{3}\,{x}^{3}}{25}+19\,{y}^{9}\,x+{y}^{19}
\end{multline}
\begin{multline}
{x}^{19}+\frac{5871\,{y}^{3}\,{x}^{13}}{25}+\frac{4617\,{y}^{4}\,{x}^{10}}{5}+19\,{y}^{9}\,{x}^{9}+\frac{4617\,{y}^{6}\,{x}^{9}}{5}+19\,y\,{x}^{9}+\\
\frac{1254\,{y}^{13}\,{x}^{3}}{25}+\frac{4617\,{y}^{8}\,{x}^{3}}{25}+\frac{1254\,{y}^{3}\,{x}^{3}}{25}+19\,{y}^{9}\,x+{y}^{19}
\end{multline}
Then we have two polynomials with denominator 56.
\begin{multline}
{x}^{19}+\frac{855\,y\,{x}^{17}}{56}+\frac{646\,{y}^{2}\,{x}^{15}}{7}+\frac{1938\,{y}^{3}\,{x}^{13}}{7}+\frac{2907\,{y}^{4}\,{x}^{11}}{7}+\\
\frac{3553\,{y}^{5}\,{x}^{9}}{14}+\frac{323\,{y}^{8}\,{x}^{3}}{7}+\frac{209\,{y}^{17}\,x}{56}+\frac{323\,{y}^{9}\,x}{28}+\frac{209\,y\,x}{56}+{y}^{19}
\end{multline}
\begin{multline}
{x}^{19}+\frac{855\,y\,{x}^{17}}{56}+\frac{323\,{y}^{2}\,{x}^{15}}{7}+\frac{323\,{y}^{8}\,{x}^{9}}{7}+\frac{323\,{y}^{5}\,{x}^{9}}{2}+\\
\frac{323\,{y}^{2}\,{x}^{9}}{7}+\frac{323\,{y}^{8}\,{x}^{3}}{7}+\frac{209\,{y}^{17}\,x}{56}+\frac{323\,{y}^{9}\,x}{28}+\frac{209\,y\,x}{56}+{y}^{19}
\end{multline}

Finally, rather surprisingly, there are 4 very similar polynomials with
denominator 110, two of which are symmetric in $x$ and $y$.
\begin{multline}
{x}^{19}+\frac{19\,y\,{x}^{17}}{2}+\frac{323\,{y}^{2}\,{x}^{15}}{11}+\frac{323\,{y}^{3}\,{x}^{13}}{11}+\frac{323\,y\,{x}^{6}}{55}+\\
\frac{323\,{y}^{13}\,{x}^{4}}{11}+\frac{323\,{y}^{14}\,{x}^{2}}{11}+\frac{19\,{y}^{17}\,x}{2}+\frac{323\,{y}^{6}\,x}{55}+\frac{399\,y\,x}{110}+{y}^{19}
\end{multline}
\begin{multline}
{x}^{19}+\frac{19\,y\,{x}^{17}}{2}+\frac{323\,{y}^{2}\,{x}^{15}}{11}+\frac{323\,{y}^{3}\,{x}^{13}}{11}+\frac{323\,y\,{x}^{6}}{55}+\\
\frac{323\,{y}^{13}\,{x}^{3}}{11}+\frac{323\,{y}^{15}\,{x}^{2}}{11}+\frac{19\,{y}^{17}\,x}{2}+\frac{323\,{y}^{6}\,x}{55}+\frac{399\,y\,x}{110}+{y}^{19}
\end{multline}
\begin{multline}
{x}^{19}+\frac{19\,y\,{x}^{17}}{2}+\frac{323\,{y}^{2}\,{x}^{14}}{11}+\frac{323\,{y}^{4}\,{x}^{13}}{11}+\frac{323\,y\,{x}^{6}}{55}+\\
\frac{323\,{y}^{13}\,{x}^{4}}{11}+\frac{323\,{y}^{14}\,{x}^{2}}{11}+\frac{19\,{y}^{17}\,x}{2}+\frac{323\,{y}^{6}\,x}{55}+\frac{399\,y\,x}{110}+{y}^{19}
\end{multline}
\begin{multline}
{x}^{19}+\frac{19\,y\,{x}^{17}}{2}+\frac{323\,{y}^{2}\,{x}^{14}}{11}+\frac{323\,{y}^{4}\,{x}^{13}}{11}+\frac{323\,y\,{x}^{6}}{55}+ \\
\frac{323\,{y}^{13}\,{x}^{3}}{11}+\frac{323\,{y}^{15}\,{x}^{2}}{11}+\frac{19\,{y}^{17}\,x}{2}+\frac{323\,{y}^{6}\,x}{55}+\frac{399\,y\,x}{110}+{y}^{19}
\end{multline}

\section{No adjacent terms}

One new optimization used to compute degree 21 requires a proof.
For terminology see \cites{LL,LP,DKR}.
Previously the code has avoided polynomials with both terms
$x^{j+1}y^k$ and $x^{j}y^{k+1}$.  If $p(x,y)$ is sharp and
contains both terms, one can obtain via an 
undoing a sharp polynomial with one of the terms missing.
If we could undo both, then the polynomial could not have been sharp to
begin with.  However, using recent work, \cite{LP}, we can prove a stronger assertion,
to improve efficiency of the code.

\begin{prop}
If $p \in \sH(2,d)$, $d > 1$ and odd, is sharp, then $p$ contains no
adjacent terms.  That is, given any $j$ and $k$, at least two of the
monomials of the form $x^{j+1}y^k$, $x^{j}y^{k+1}$, and $x^jy^k$ do not
appear in $p$.
\end{prop}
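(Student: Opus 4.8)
The plan is to argue by contradiction, combining the elementary \emph{undoing} move with the bound $d \le 2N-3$, where $N = \#\supp p$. The move replaces $p$ by
\[
\tilde p = p - c\,x^{j}y^{k}(x+y-1),
\]
which again equals $1$ on $x+y=1$ and has nonnegative coefficients as long as $c \le \min(\alpha,\beta)$, where $\alpha$ and $\beta$ are the coefficients of $x^{j+1}y^{k}$ and $x^{j}y^{k+1}$ in $p$; thus $\tilde p \in \sH(2,\tilde d)$. I would first record the standing fact (see \cites{DKR,LL}) that $x^{d},y^{d}\in\supp p$. Since undoing changes only the coefficients in bidegrees $j+k$ and $j+k+1$ and leaves $x^{d}$ and $y^{d}$ fixed (for $d>1$ these two can never form an anti-diagonal pair), the degree is preserved, $\tilde d = d$. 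This is what lets the bound bite.

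With $p$ sharp, so $d = 2N-3$, suppose two of $x^{j}y^{k}$, $x^{j+1}y^{k}$, $x^{j}y^{k+1}$ occur. Undo the anti-diagonal pair with $c=\min(\alpha,\beta)$. If all three occur (so $x^{j}y^{k}$ is already present) or if $\alpha=\beta$, then $\#\supp\tilde p \le N-1$: in the first case one anti-diagonal monomial disappears while $x^{j}y^{k}$ remains, and in the second both disappear and only $x^{j}y^{k}$ is created. Together with $\tilde d=d$ this yields $d \le 2(N-1)-3 = 2N-5 < d$, a contradiction. Hence a sharp polynomial contains no such triple and no equal-coefficient anti-diagonal pair; this already subsumes the ``undo both'' heuristic recalled above.

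What remains are the \emph{irreducible} adjacencies: an anti-diagonal pair with $\alpha\ne\beta$ and $x^{j}y^{k}\notin\supp p$, and the horizontal and vertical pairs. These are interchanged by reversible, sharpness-preserving moves. Undoing an unequal anti-diagonal pair with $c=\min(\alpha,\beta)$ fixes both $\#\supp$ and $d$, hence preserves sharpness, and leaves $x^{j}y^{k}$ next to the surviving anti-diagonal monomial, i.e.\ a horizontal or vertical pair; fully tensoring the corner of such a pair is the inverse move and returns an unequal anti-diagonal pair. Thus all three configurations lie in one orbit, none of them alters $N$, and so none can be excluded by the degree count alone; by the $x\leftrightarrow y$ symmetry it suffices to treat the vertical pair.

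This is where the recent work \cite{LP} is essential, and it is the main obstacle. The route I would pursue is to slide the irreducible pair through the Newton diagram using the moves above --- tensor the corner, then re-undo at a shifted anchor --- driving it toward the forced monomials $x^{d},y^{d}$, and to use the structural information of \cite{LP} about sharp polynomials to show that the slide must terminate by producing either a triple or an equal-coefficient anti-diagonal pair, throwing us back to the reducible case and the contradiction above. The hard part is precisely this termination-and-collision analysis: one must control the coefficients along the slide so that an \emph{equal} pair or a full triple is genuinely forced rather than the adjacency merely drifting off the diagram, and it is for this that the finer results of \cite{LP}, not available when \cite{LL} was written, are needed.
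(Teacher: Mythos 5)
Your elementary half is fine: undoing an anti-diagonal pair with $c=\min(\alpha,\beta)$ preserves the degree (for $d>1$ the pair cannot contain both $x^d$ and $y^d$), and when the corner monomial is present or $\alpha=\beta$ it strictly decreases $\#\supp$, contradicting $d=2N-3$. Your observation that the remaining configurations --- an unequal anti-diagonal pair with no corner term, and the horizontal/vertical pairs --- form a single orbit under sharpness-preserving undo/tensor moves is also correct, and it matches the reduction the paper itself makes (multiplying the corner term by $(x+y)$ to put any adjacency into the same-degree form). But everything up to this point only recovers what the older ``undoing'' heuristic, quoted in the paper right before the proposition, already gave; the actual content of the proposition is exactly the irreducible case, and there your proposal stops being a proof. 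You offer a plan --- slide the pair through the diagram and invoke \cite{LP} to force a collision with a triple or an equal pair --- and you state yourself that the termination-and-collision analysis is missing. Nothing in the proposal controls the coefficients along the slide, nor rules out the adjacency simply persisting; so the argument does not close, and this is a genuine gap, not a detail.

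For comparison, the paper closes that case not by sliding but by a global counting argument on the Newton diagram of $q=(p-1)/(x+y-1)$. Since $p$ is sharp, every term of $p-1$ corresponds to a sink or a source; since $p-1$ has a single negative term the diagram is connected; and the result of \cite{LP} that actually gets used is the bound $\frac{d+5}{2}$ relating the degree of a \emph{connected} diagram to its number of sinks plus sources --- a bound that sharpness forces to be attained, i.e.\ the count is already extremal. Consequently no modification of the diagram (flipping $P$s and $N$s, or filling in zeros) that keeps it connected can strictly improve the count. The proof then checks that two adjacent sinks always permit exactly such an improvement: either the middle $P$ can be flipped to an $N$, or set to $0$, or, in the one stubborn subcase, one fills the missing rows with alternating signs starting from the lowest incomplete degree until an $N$ can be placed that cancels one of the two sinks. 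That contradiction is the whole proof. This sink/source count is the quantitative input your sketch lacks; as you correctly note, the degree bound $d\le 2N-3$ alone cannot distinguish the irreducible configurations, so any completion of your route would need an invariant of comparable strength.
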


\begin{proof}
In \cite{LL} it was proved that $p$ must contain terms $x^d$, $y^d$.
The Newton diagram (see \cite{LL}) of $q(x,y) = \frac{p(x,y)-1}{x+y-1}$
must therefore contain all $P$s on the sides leading to pure monomials.

The top row (degree $d-1$ in $q$) must contain all
terms.  In \cite{LL} we have proved that the top row must be
alternating $N$s and $P$s.  This fact follows from the observation that the
only degree $d$ terms in $p$ are $x^d$ and $y^d$.

For $d > 1$ then by the above we see that any adjacent terms would have
to occur somewhere in the interior.  We also assume that adjacent
terms occur at the same degree.  For example if we got
$x^{j+1}y^k$, and $x^jy^k$, we could multiply the 
$x^jy^k$ term by $(x+y)$ to obtain another sharp polynomial with
two adjacent terms of same degree.

As $p$ is sharp,
all terms in $p-1$ must correspond to
sinks and sources.
Using terminology from \cite{LP}, the Newton diagram must be connected
as $p-1$ has only one negative term.
Furthermore, it was proved
in \cite{LP} that the number of sources and sinks satisfies the
same bound for connected diagrams as those arising from $\sH(2,d)$.  In
other words,
there can be at most $\frac{d+5}{2}$ sinks and sources.  Therefore
if we fill in any zeros with $P$s or $N$s or flip signs,
without increasing the number of
sinks and sources, we cannot in fact decrease the number of sinks and
sources as it is already minimal.  So we can never be in a situation
where flipping $P$s and $N$s or setting zeros to $P$s and $N$s reduces
the number of sinks or sources.  Let us disqualify certain situations
by showing we could reduce the number of sinks or sources.

Suppose we have two sinks next to each other in a configuration such
as:
\begin{equation}
\begin{matrix}
\ & N &\ & N  \\
P &\ & P &\ & P \\
\ & P &\ & P 
\end{matrix}
\end{equation}
Here the rows correspond to a fixed degree of terms in $q$.  For example,
the middle row corresponds to the terms
$x^{j-1}y^{k+1}$, $x^{j}y^{k}$, and $x^{j+1}y^{k-1}$.

We could flip the middle $P$ (term corresponding to $x^j y^k$)
to an $N$ and decrease the number of sinks.
There of course could also be zeros present. However, any time the top
two sinks have both at least one $P$ or $N$ from a term that
does not correspond to the $x^jy^k$, we could set the term corresponding
to $x^j y^k$ to $N$ and remove two sinks.  If both sinks have just zeros
as in
\begin{equation}
\begin{matrix}
\ & 0 &\ & 0  \\
0 &\ & P &\ & 0 \\
\ & P &\ & P 
\end{matrix}
\end{equation}
simply switching the $P$ to a 0 will remove the two sinks.  Therefore,
we must have one of the sinks have simply zeros and we must have a $P$
at the $x^jy^k$ term (otherwise one of the sinks would not be there).  In
other words we have a situation such as
\begin{equation}
\begin{matrix}
\ & 0 &\ & N  \\
0 &\ & P &\ & P \\
\ & P &\ & P 
\end{matrix}
\end{equation}
Now we cannot just flip the $P$ to an $N$.  Doing so would kill one sink,
convert one to a source and create a new sink, so it would not lower the
number of sinks and sources.  So let us start filling.

Take the smallest degree $k$ for which $q$ has a term missing.  The row
corresponding to degree $k$ and $k-1$ will therefore have a place that has
a gap of zeros such as
\begin{equation}
\begin{matrix}
P &\ & 0 &\ & \cdots &\ & 0 &\ & P  \\
\ & P &\ & N & \cdots & N &\ & P 
\end{matrix}
\end{equation}
possibly with the $P$s and $N$s reversed.  We know we always have such
a situation, since the diagram is connected and all the sides
are already filled.

If we fill the row of 0s with
alternating $P$s and $N$s the total number of sinks and sources cannot increase.
We may have converted a sink to a source or vice versa, but we have not
increased the total number.

We keep filling until we get to the row that is the middle row in the
configuration
\begin{equation}
\begin{matrix}
\ & 0 &\ & N  \\
0 &\ & P &\ & P \\
\ & P &\ & P 
\end{matrix}
\end{equation}
we notice that we could start filling that row with an $N$ on the right hand
side and that would cancel one of the two adjacent sinks.  And this would
lead to a contradiction that the number of sinks plus sources was optimal.
\end{proof}

\section{The computer code}

The code used is the C code described in detail in \cite{LL}.
The new revision of the code
that was used in this computation has been posted at \cite{code}.
The basic idea is to consider $p(x,1-x)-1 = 0$, which provides a linear
equation for the coefficients.  We write this equation as a matrix $A$ that takes
coefficients of degree $d-1$ or less to the degree $d$ coefficients.  We
know that the degree $d$ coefficients have the form $x^d+y^d$.
We iterate over the list of
possible monomials of degree $d-1$ or less, taking the corresponding
columns of the matrix, we look for nonzero solutions.  The idea
of the algorithm is to find those submatrices that are not
of full rank (have a nontrivial solution).  Then we check if this
solution has positive coefficients.
There are several heuristics that are applied that can avoid
doing row reduction at all.
Already in \cite{LL},
to improve speed of the row reduction, we first used mod $p$
arithmetic for reduction for a small prime,
as most of the submatrices considered
are full rank.  This technique reduces the need to do row reduction in full
integer arithmetic in vast majority of the cases.  We have used $p=19$
in degree 17 or less, and
in this calculation, we used $p=23$.  The prime must not divide the degree, as
most entries in the matrix are divisible by $d$.

The major improvements done in this revision are the following
\begin{enumerate}[(i)]
\item The row reduction is first done mod 2 before being done mode $p$.  Mod 2
is much faster than mod $p$.  The code actually
does column reduction in mod 2
and considers columns as unsigned integers.  The 
reduction removes one internal loop as adding columns together
is simply an XOR operation.  Unfortunately mod 2 arithmetic only eliminates
90\% of the full rank cases, but it
is approximately 4 times as fast on these 90\%.  If
the test fails with mod 2, we move to mod $p$ as before.
\item Just as the Mathematica code from~\cite{LL}, the C code now
ignores polynomials that are ``right side heavy.''  That is, polynomials
where there are more terms with higher power of $y$ than of $x$.  This
optimization reduces the run time by approximately one third.
\item No adjacent terms can appear as was mentioned before.  Skipping
all these cases improved the runtime 
by another factor of one half.
\item Many other more minor optimizations were done, whose individual impact
was harder to measure.
\end{enumerate}

Overall, the new optimizations together with improvements in speed of
computers since 2008, the current code runs approximately 25--50 times faster
than it did in 2008.

\section{Candidate sequence for uniqueness}

In \cite{LL} we have stated that we tried the construction of section 8 up
to degree 513 and listed the degrees not ruled out up to degree 149.
The code was run to degree 1250 and it seems this is a proper place to
record the results.  Therefore, the construction of section 8 in \cite{LL}
and hence an extension of the list in Proposition 7.2 is:

1, 3, 5, 9, 17, 21, 33, 41, 45, 53, 69, 77, 81, 93, 105, 113, 117, 125, 129,
141, 149, 153, 161, 165, 177, 185, 201, 213, 221, 225, 249, 261, 269, 273,
285, 297, 305, 309, 333, 341, 345, 357, 365, 369, 381, 405, 413, 417, 429,
437, 441, 453, 465, 473, 489, 501, 521, 525, 537, 549, 581, 585, 597, 609,
617, 621, 633, 645, 653, 665, 689, 693, 701, 705, 725, 729, 741, 753, 765,
773, 777, 789, 809, 825, 833, 837, 845, 861, 881, 885, 897, 905, 909, 921,
933, 953, 957, 969, 981, 993, 1017, 1029, 1041, 1049, 1053, 1061, 1065,
1085, 1089, 1097, 1101, 1113, 1125, 1137, 1149, 1157, 1173, 1185, 1193,
1197, 1205, 1229, 1233


\def\MR#1{\relax\ifhmode\unskip\spacefactor3000 \space\fi%
  \href{http://www.ams.org/mathscinet-getitem?mr=#1}{MR#1}}

\begin{bibdiv}
\begin{biblist}

\bib{DKR}{article}{
   author={D'Angelo, John P.},
   author={Kos, {\v{S}}imon},
   author={Riehl, Emily},
   title={A sharp bound for the degree of proper monomial mappings between
   balls},
   journal={J. Geom. Anal.},
   volume={13},
   date={2003},
   number={4},
   pages={581--593},
   issn={1050-6926},
   review={\MR{2005154}},
}

\bib{LL}{article}{
    author={Lebl, Ji{\v r}\'{\i}},
    author={Lichtblau, Daniel},
    title={Uniqueness of certain polynomials constant on a hyperplane},
    journal={Linear Algebra Appl.},
    volume={433},
    year={2010},
    number={4},
    pages={824--837},
    note={\href{http://www.arxiv.org/abs/0808.0284}{arXiv:0808.0284}},
   review={\MR{2654111}},
}

\bib{LP}{article}{
   author={Lebl, Ji{\v{r}}{\'{\i}}},
   author={Peters, Han},
   title={Polynomials constant on a hyperplane and CR maps of hyperquadrics},
   journal={Mosc.\ Math.\ J.},
   volume={11},
   date={2011},
   number={2},
   pages={287--317},
   note = {\href{http://arxiv.org/abs/0910.2673}{arXiv:0910.2673}},
   review={\MR{2859238}},
}

\bib{code}{misc}{
  author={Lebl, Ji{\v r}\'{\i}},
  title={Computer code used},
  note={\url{http://www.jirka.org/rref-v3.zip}}
}

\bib{OEIS:uniq}{misc}{
  author={OEIS Foundation Inc.},
  title={The on-line encyclopedia of integer sequences},
  note={\url{http://oeis.org/A143106}}
}

\bib{OEIS}{misc}{
  author={OEIS Foundation Inc.},
  title={The on-line encyclopedia of integer sequences},
  note={\url{http://oeis.org/A143107}}
}

\end{biblist}
\end{bibdiv}

\end{document}